\newcommand{\arxiv}[2][]{\ifthenelse{\equal{#1}{}}
{\href{http://arxiv.org/abs/#2}{\tt arXiv:#2}}
{\href{http://arxiv.org/abs/math/#2}{\tt arXiv:math.#1/#2}}}
\theoremstyle{plain}
\newtheorem{theorem}{Theorem}
\newtheorem{lemma}[theorem]{Lemma}
\theoremstyle{definition}
\newtheorem{remark}[theorem]{Remark}
\def\x{\times}
\def\but{\setminus}
\def\emb{\hookrightarrow}
\def\eps{\varepsilon}
\def\phi{\varphi}
\renewcommand{\:}{\colon}
\def\N{\Bbb{N}}
\def\R{\Bbb{R}}
\def\Z{\Bbb{Z}}
\begin{document}
\title{A note on O. Frolkina's paper ``Pairwise disjoint Moebius bands in space''}
\author{Sergey A. Melikhov}
\address{Steklov Mathematical Institute of Russian Academy of Sciences, Moscow, Russia}
\email{melikhov@mi-ras.ru}

\maketitle

\begin{theorem} \label{t1} If $M$ is a non-orientable triangulable $(n-1)$-manifold (possibly with boundary), then
$\R^n$ does not contain uncountably many pairwise disjoint copies of $M$.
\end{theorem}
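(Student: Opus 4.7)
The natural approach is a proof by contradiction combined with a separability argument: since $\R^n$ admits only countably many pairwise disjoint nonempty open sets, one wishes to attach to each $M_\alpha$ some canonical data drawn from a countable pool.

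First I would reduce to the case that $M$ is connected, since replacing $M$ by a non-orientable connected component and each $M_\alpha$ by its corresponding subset gives an equally uncountable pairwise disjoint family of copies of the new $M$. Non-orientability then provides an embedded simple closed curve $\gamma \subset M$ along which $M$ twists, i.e., a regular neighborhood of $\gamma$ in $M$ is a M\"obius band $B$. Each $M_\alpha$ thus contains a copy $B_\alpha$ of $B$ with core $\gamma_\alpha$, and the $B_\alpha$ are pairwise disjoint.

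The crucial geometric input is that $M_\alpha$, being non-orientable of codimension one in $\R^n$, has non-trivial normal line bundle (namely, the orientation line bundle). Consequently a thin tubular neighborhood of $M_\alpha$ in $\R^n$ is \emph{one-sided}: its complement of $M_\alpha$ is connected, even though $M_\alpha$ locally separates $\R^n$ into two pieces near each of its interior points. For each $\alpha$ I would exploit this to produce a rational witness to the non-orientability of $M_\alpha$: concretely, a triple $(W_\alpha, p_\alpha^+, p_\alpha^-)$ in which $W_\alpha$ is a rational open ball meeting $M_\alpha$ transversely in an $(n-1)$-disk that cuts $W_\alpha$ into two pieces, and $p_\alpha^\pm$ are rational points one in each piece which are nevertheless joined by an arc in $\R^n \setminus M_\alpha$ running once around $\gamma_\alpha$ through the M\"obius tube of $M_\alpha$.

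Since only countably many such triples exist, the decisive step --- and what I expect to be the main obstacle --- is to show that only countably many $M_\alpha$ can correspond to any fixed triple $(W, p^+, p^-)$. Here non-orientability must be used essentially: if two disjoint copies $M_\alpha, M_\beta$ both admitted the same $(W, p^+, p^-)$ as a witness, the arc through the M\"obius tube of $M_\alpha$ would have to avoid $M_\beta$ as well, and a Moore-type combinatorial argument --- applied in a slab transverse to $\gamma_\alpha$ where $M_\beta$ meets the M\"obius tube of $M_\alpha$ in a controlled family of sheets --- should force a contradiction with pairwise disjointness. If the naive triple proves insufficient, I would enrich it with further rational combinatorial data, for instance a rational polygonal approximation of the connecting arc, so as to refine the assignment down to a countable one.
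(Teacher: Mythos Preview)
Your plan has a genuine gap rooted in a tacit tameness assumption that the theorem does not make. The copies $M_\alpha\subset\R^n$ are merely topological embeddings; for $n\ge 4$ they may be wild, and then there is no normal line bundle, no tubular ``M\"obius tube'' around $M_\alpha$, and no sense in which a small ball can meet $M_\alpha$ ``transversely in an $(n-1)$-disk''. (Indeed, Frolkina's argument for general $n$ is stated only for tame copies, and the point of the paper's proof is precisely to remove that hypothesis.) Local two-sidedness at interior points of $M_\alpha$ does survive wildness via Alexander duality, but the construction of an arc in $\R^n\setminus M_\alpha$ that ``runs once around $\gamma_\alpha$ through the M\"obius tube'' presupposes a product structure on a neighborhood of $M_\alpha$ that need not exist.

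Even setting tameness aside, the step you yourself flag as decisive is not a residual obstacle but essentially the whole theorem, and the sketch does not address it: if two disjoint wild copies $M_\alpha,M_\beta$ share a witness there is no reason the arc for $M_\alpha$ avoids $M_\beta$, nor any ``slab transverse to $\gamma_\alpha$'' in which $M_\beta$ cuts the (nonexistent) tube in controlled sheets; enriching the witness by a rational polygonal arc only pushes the problem to showing the arc misses uncountably many of the $M_\beta$. The paper bypasses all of this with pure algebraic topology: separability of $C^0(M,\R^n)$ gives an embedding $M\times\N^+\hookrightarrow\R^n$, from which one extracts a $\Z/2$-equivariant map $\widetilde{M\times I}\to S^{n-1}$ on the deleted product; but $N=M\times I$ is a non-orientable $n$-manifold, so $w_1(N)\ne 0$, and the cohomology ring of the projectivized tangent bundle then forces $c^n\ne 0$ for the tautological class $c$, contradicting the existence of such an equivariant map. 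No geometric property of the individual embeddings is ever used.
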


The case $n=3$ is due to Frolkina \cite{Fr}; her proof involves replacing the given copies of $M$ by tame ones. 
She also proves that $\R^n$ does not contain uncountably many pairwise disjoint {\it tame} copies of $M$.
The proof of Theorem \ref{t1} is an exercise in elementary algebraic topology, which I found myself doing as 
I was asked to referee Frolkina's paper. 

\begin{remark}
It is well-known that if $X$ is a compactum such that $\R^n$ contains uncountably many pairwise disjoint copies of $X$,
then $X\x\N^+$ embeds in $\R^n$, where $\N^+$ is the one-point compactification of the countable discrete space $\N$.
Indeed, the space of maps $C^0(X,\R^n)$ is separable, so any its uncountable subset $S$ contains a convergent sequence $(f_i)$ 
whose limit $f_\infty$ also belongs to $S$.
\end{remark}

\begin{lemma} \label{l1} If $X$ is a compact polyhedron such that $X\x\N^+$ embeds in $\R^n$,
then the deleted product $\widetilde{X\x I}$ admits a $\Z/2$-map to $S^{n-1}$.
\end{lemma}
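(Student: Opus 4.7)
The plan is to extend the embedding $f\colon X \x \N^+ \emb \R^n$ to a continuous map $g\colon X \x I \to \R^n$ and then define the $\Z/2$-map by the normalized difference. Identify $\N^+$ with the closed subset $\{0\} \cup \{1/k : k \ge 1\}$ of $I$, so that $f$ yields pairwise disjoint embeddings $f_k\colon X \to \R^n$ (for $k \in \N$) together with their uniform limit $f_\infty$, all pairwise disjoint. Extend $f$ to $g\colon X \x I \to \R^n$ by piecewise linear interpolation between $f_{k+1}$ and $f_k$ on each $X \x [1/(k{+}1), 1/k]$, with $g(x,0) = f_\infty(x)$; uniform convergence $f_k \to f_\infty$ ensures continuity at $t = 0$. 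The candidate $\Phi(p, q) = (g(p) - g(q))/|g(p) - g(q)|$ is automatically $\Z/2$-antisymmetric, hence equivariant, and continuous on the open set where $g(p) \ne g(q)$.

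The essential obstacle is that $g$ need not be injective: the interpolating line segments in $\R^n$ can meet each other. This cannot be cured merely by cleverer interpolation, because in the dimensional range that drives Theorem~\ref{t1} ($\dim X = n - 1$), the space $X \x I$ is an $n$-dimensional compact polyhedron which generally does not embed in $\R^n$ at all (for instance, $M \x I$ is a non-orientable $n$-manifold whenever $M$ is non-orientable). Thus some genuine use of the infinite family of copies is unavoidable.

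To get around this I would cover $\widetilde{X \x I}$ by two $\Z/2$-invariant open sets $A = \{(p,q) : x_p \ne x_q\}$ and $B = \{(p,q) : s_p \ne s_q\}$, whose union is $\widetilde{X \x I}$ since $p \ne q$ forces $x_p \ne x_q$ or $s_p \ne s_q$. On $A$, any single embedding $f_m$ yields a $\Z/2$-equivariant $\Phi_A(p,q) = (f_m(x_p) - f_m(x_q))/|f_m(x_p) - f_m(x_q)|$. On $B$, choosing two distinct indices $a \ne b$ and using the sign of $s_p - s_q$ to decide which copy is evaluated at each factor gives a $\Z/2$-equivariant $\Phi_B$ whose numerator never vanishes because $f_a(X) \cap f_b(X) = \emptyset$.

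The technical heart of the proof is $\Z/2$-equivariantly patching $\Phi_A$ and $\Phi_B$ on the overlap $A \cap B = \widetilde X \x \widetilde I$. I would attempt this with a $\Z/2$-equivariant partition of unity $\{\rho_A, \rho_B\}$ subordinate to $\{A, B\}$, forming the convex combination $\rho_A \Phi_A + \rho_B \Phi_B$ in $\R^n$ and normalizing. The only failure mode is that $\Phi_A$ and $\Phi_B$ become antipodal on the locus where $\rho_A = \rho_B = 1/2$. The main obstacle, and where I expect the real work of the proof to lie, is ruling this antipodal cancellation out, presumably by exploiting the infinite reservoir of copies $\{f_k\}_{k \in \N}$ to choose the indices $m, a, b$ (or even to let them vary over $A \cap B$) adaptively so that no cancellation can occur.
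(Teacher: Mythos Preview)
Your proposal correctly identifies the central difficulty --- the interpolated map $g$ need not be injective --- but leaves the resolution incomplete. The patching of $\Phi_A$ and $\Phi_B$ on $A\cap B$ is not a technicality: gluing two equivariant maps into $S^{n-1}$ along an overlap requires them to be equivariantly homotopic there, and nothing in your construction forces this. Your $\Phi_A$ depends only on $(x_p,x_q)$ and your $\Phi_B$ only on $(x_p,x_q)$ together with the sign of $s_p-s_q$, with the indices $m,a,b$ fixed once and for all; there is no mechanism tying these two maps together, and I do not see how an ``adaptive'' choice of indices would help, since any such choice must still vary continuously over $A\cap B$.

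The paper bypasses patching altogether by making quantitative use of the convergence $f_k\to f_\infty$. Give $X$ the metric pulled back by $f_\infty$, choose a $\Z/2$-invariant regular neighborhood $R$ of the diagonal in $X\times X$ and an $\eps>0$ so that the $2\eps$-neighborhood of the diagonal lies in $R$, and then pick a \emph{single} index $i$ with $\sup_x|f_i(x)-f_\infty(x)|<\eps$. The straight-line homotopy $g$ from $f_\infty$ to $f_i$ then satisfies $|g(x,s)-f_\infty(x)|<\eps$ for every $s\in I$, so by the triangle inequality $g(p)\ne g(q)$ whenever $(x_p,x_q)\notin R$ (regardless of $s_p,s_q$), and also whenever $\{s_p,s_q\}=\{0,1\}$ since $f_i(X)\cap f_\infty(X)=\emptyset$. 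The normalized difference is therefore defined on the single set
\[
\tilde Y_R=\bigl\{(p,q):(x_p,x_q)\notin R\ \text{or}\ \{s_p,s_q\}=\{0,1\}\bigr\},
\]
which is a $\Z/2$-deformation retract of $\widetilde{X\times I}$. No gluing is needed: the infinite family of copies is used only to guarantee one copy $\eps$-close to $f_\infty$.
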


Here $\Z/2$ acts on $\tilde Y:=Y\x Y\but\Delta$ by exchanging the factors and on $S^{n-1}$ by the antipodal involution $x\mapsto -x$.

\begin{proof} 
Let $g=(g_1,g_2,\dots,g_\infty)\:X\x\N^+\emb\R^n$ be an embedding.
Let us endow $X$ with the metric induced by $g_\infty$.
Let $R$ be a $\Z/2$-invariant regular neighborhood of the diagonal in $X\x X$, so that
$X\x X\but R$ is $\Z/2$-homeomorphic to $\tilde X$.
Then there exists an $\eps>0$ such that $R$ contains the $2\eps$-neighborhood of $\Delta$ 
in the $l_\infty$ product metric on $X\x X$.
Then there exists an $i\in\N$ such that $d(g_i,g_\infty)<\eps$.
Let $Y=X\x[0,1]$ and let us define $g\:Y\to\R^n$ by $g(x,t)=tg_i(x)+(1-t)g_\infty(x)$.
Let $\tilde Y_R=\{\big((x,s),(y,t)\big)\in Y\x Y\mid (x,y)\notin R\text{ or } (t,s)\in\widetilde{\{0,1\}}\}$.
Since $(x,y)\notin R$ implies $d(x,y)\ge 2\eps$, we have $g(p)\ne g(q)$ for any $(p,q)\in\tilde Y_R$.
Hence we may define an equivariant map $\tilde g\:\tilde Y_R\to S^{n-1}$ by $\tilde g(p,q)=\frac{||g(q)-g(p)||}{g(q)-g(p)}$.
Clearly, $\tilde Y_R$ is a $\Z/2$-deformation retract of $\tilde Y$.
Thus we obtain an equivariant map $\tilde Y\to S^{n-1}$.
\end{proof}

\begin{proof}[Proof of Theorem \ref{t1}]
Let us note that $M$ is a pseudo-manifold (possibly with boundary).  
By considering a regular neighborhood in $M$ of an embedded orientation-reversing loop in the dual 1-skeleton of $M$ we may assume that $M$ 
is homeomorphic to the total space of the nonorientable $(n-2)$-disc bundle over $S^1$.
By Lemma \ref{l1} it suffices show that if $N$ is a non-orientable smooth $n$-manifold (namely, $N=M\x I$),
then $\tilde N$ admits no equivariant map to $S^{n-1}$.
By considering the interior of $N$ we may assume that it has no boundary.
Let $SN$ be the total space of the spherical tangent bundle of $N$ and let $t$ be the involution on $SN$ 
that is antipodal on each fiber $S^{n-1}$.
Let $c\in H^1(PN;\,\Z/2)$ be the fundamental class of $t$, i.e.\ the first Stiefel--Whitney class of
the line bundle associated to the double covering $SN\to PN:=SN/t$.
Then according to one of the definitions of the Stiefel--Whitney classes $w_i=w_i(N)$ \cite{CF}*{(6.2)},
$c^n=q^*(w_n)+q^*(w_{n-1})c+\dots+q^*(w_1)c^{n-1}$, where $q\:PN\to N$ is the projectivized tangent bundle of $N$.
Since $N$ is non-orientable, we have $w_1\ne 0$.
Then, since the $w_i$ are uniquely determined by the previous formula (see \cite{CF} and note that the authors are implicitly
using the Leray--Hirsch theorem), we must have $c^n\ne 0$.
But if there exists an equivariant map $\phi\:SN\to S^{n-1}$, then $c=\phi^*(d)$, where $d\in H^1(\R P^{n-1};\,\Z/2)$ 
is the fundamental class of the antipodal involution on $S^{n-1}$, and hence $c^n=\phi^*(d^n)=0$.
\end{proof}

In conclusion, let us note another application of Lemma \ref{l1} (which is used in \cite{Fr}).

\begin{lemma} \label{l2} \cite{SS} If $X$ is a contractible compact polyhedron such that $X\x\N^+$ embeds in $\R^n$,
then the suspension $\Sigma\tilde X$ admits a $\Z/2$-map to $S^{n-1}$.
\end{lemma}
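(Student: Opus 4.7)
The plan is to reduce to Lemma~\ref{l1} by constructing an explicit equivariant map $\Phi\:\Sigma\tilde X\to \widetilde{X\x I}$ that uses the contractibility of $X$. Composing $\Phi$ with the $\Z/2$-map $\widetilde{X\x I}\to S^{n-1}$ supplied by Lemma~\ref{l1} will then yield the required map. The relevant $\Z/2$-action on $\Sigma\tilde X$ is the one that swaps the two factors of $\tilde X$ and simultaneously negates the suspension coordinate; this is free because the swap on $\tilde X$ is free and the two cone points are interchanged.

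To build $\Phi$, first fix a contraction $h\:X\x[0,1]\to X$ with $h(x,0)=x$ and $h(x,1)=\ast$. Writing points of $\Sigma\tilde X$ as equivalence classes $[x,y;t]$ with $(x,y)\in\tilde X$ and $t\in[-1,1]$, where $\tilde X\x\{\pm 1\}$ is collapsed, I propose
\[
\Phi\bigl([x,y;t]\bigr) = \Bigl(\bigl(h(x,|t|),\tfrac{1+t}{2}\bigr),\,\bigl(h(y,|t|),\tfrac{1-t}{2}\bigr)\Bigr).
\]
Three verifications are then in order, each essentially a one-liner. Equivariance: applying $(x,y,t)\mapsto(y,x,-t)$ exchanges the two ordered pairs on the right. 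Well-definedness on the suspension: at $t=\pm 1$ both $X$-coordinates collapse to $\ast$, so the image reduces to $((\ast,1),(\ast,0))$ or $((\ast,0),(\ast,1))$, independent of $(x,y)$. Image in the deleted product: the two ordered pairs can coincide only when $(1+t)/2=(1-t)/2$ and $h(x,|t|)=h(y,|t|)$, i.e.\ $t=0$ and $x=y$, which is excluded on $\tilde X$.

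The step that requires genuine thought, and is really the only one where choices matter, is arranging that the two parameters $|t|$ (driving the contraction) and $t$ (controlling the $I$-coordinate in $X\x I$) cooperate at the cone points: one needs $|t|=1$ at $t=\pm 1$ so that the $X$-coordinates degenerate to a common basepoint, and simultaneously one needs the $I$-coordinates to remain distinct so the image stays off the diagonal. The displayed formula is engineered to achieve both at once; after that every remaining check is routine.
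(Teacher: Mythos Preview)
Your proof is correct and follows essentially the same strategy as the paper: reduce to Lemma~\ref{l1} by exhibiting a $\Z/2$-map $\Sigma\tilde X\to\widetilde{X\x I}$. The paper factors this map conceptually through the double mapping cylinder of the two projections $\tilde X\to X$ (contractibility gives the map from the suspension to the cylinder, and the cylinder then $\Z/2$-embeds in $\widetilde{X\x I}$), while you write the composite down in one explicit formula; the content is the same.
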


\begin{proof} Since $X$ is contractible, $\Sigma\tilde X$ admits a $\Z/2$-map to the double mapping cylinder of the inclusions 
$X\supset\tilde X\subset X$.
The latter $\Z/2$-embeds in $\widetilde{X\x I}$.
\end{proof}

\begin{theorem} \label{t2} \cite{Yo} $\R^n$ does not contain uncountably many pairwise disjoint copies of the $(n-1)$-dimensional
umbrella $U^{n-1}$, that is, the cone over $S^{n-2}\sqcup pt$.
\end{theorem}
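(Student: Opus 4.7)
The plan is to apply Lemma \ref{l2} and derive a contradiction cohomologically. If $\R^n$ contained uncountably many pairwise disjoint copies of the contractible polyhedron $U^{n-1}$, the Remark and Lemma \ref{l2} would yield a $\Z/2$-equivariant map $\Sigma\widetilde{U^{n-1}}\to S^{n-1}$. Since $\Sigma Y\cong Y*S^0$ as $\Z/2$-spaces with the diagonal action, and $\mathrm{ind}_{\Z/2}(A*B)=\mathrm{ind}_{\Z/2}(A)+\mathrm{ind}_{\Z/2}(B)+1$ for free actions, it suffices to rule out a $\Z/2$-equivariant map $\widetilde{U^{n-1}}\to S^{n-2}$, equivalently to show $w^{n-1}\ne 0$ in $H^{n-1}(\widetilde{U^{n-1}}/(\Z/2);\,\Z/2)$, where $w$ is the first Stiefel--Whitney class of the double cover.

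The key step will be to compute $\widetilde{U^{n-1}}/(\Z/2)\simeq\R P^{n-1}$. Write $U^{n-1}=D\cup_p I$ with $D=D^{n-1}$ the disk part and $I$ the handle attached at the cone point $p$. The orbit space is the union of three subcomplexes: $X_D=\tilde D/(\Z/2)$, the mixed piece $X_{DI}\cong D\x I\but\{(p,p)\}$ parametrizing unordered pairs $\{x,y\}$ with $x\in D$ and $y\in I$, and $X_I=\tilde I/(\Z/2)$. Here $X_D$ equivariantly deformation retracts via the chord-extension map onto $\widetilde{\partial D}/(\Z/2)\simeq\R P^{n-2}$; both $X_{DI}$ and $X_I$ are contractible ($X_{DI}$ because $(p,p)$ is a boundary point of the $n$-cell $D\x I$, and $X_I$ is an open triangle). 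The nontrivial intersections are $X_D\cap X_{DI}=\{\{x,p\}:x\in D\but\{p\}\}\simeq S^{n-2}$ and $X_{DI}\cap X_I\simeq\ast$.

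The crucial observation is that under the retraction $X_D\to\R P^{n-2}$, $\{x,y\}\mapsto[y-x]$, the subspace $D\but\{p\}$ maps via $x\mapsto[x]$, which under the radial equivalence $D\but\{p\}\simeq S^{n-2}$ is the standard double covering $S^{n-2}\to\R P^{n-2}$. Attaching the contractible $X_{DI}$ along this map yields the homotopy cofiber $\R P^{n-2}\cup_{\text{double cover}}D^{n-1}\cong\R P^{n-1}$, and attaching the contractible $X_I$ along a contractible subspace preserves the homotopy type. Hence $\widetilde{U^{n-1}}/(\Z/2)\simeq\R P^{n-1}$ and $w^{n-1}\ne 0$. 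A $\Z/2$-equivariant map $\widetilde{U^{n-1}}\to S^{n-2}$ would give a classifying map to $\R P^{n-2}$ pulling the generator back to $w$, forcing $w^{n-1}$ to vanish since $H^{n-1}(\R P^{n-2};\,\Z/2)=0$: contradiction. The principal difficulty is recognizing the double covering hidden in the inclusion $D\but\{p\}\hookrightarrow X_D$, which is what produces the extra $(n-1)$-cell needed to raise the Stiefel--Whitney power from $n-2$ (the disk case) to $n-1$.
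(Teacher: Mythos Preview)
Your argument is correct and gives a genuinely different proof from the paper's.  The paper does not compute the homotopy type of $\widetilde{U}/(\Z/2)$ at all.  Instead it triangulates $U$ as the cone over $\partial\Delta^{n-1}\sqcup pt$, observes that this is a \emph{self-dual} subcomplex of $\Delta^{n+1}$ (it contains exactly one face out of each complementary pair), and invokes the general fact that for any self-dual $K\subset\Delta^{n+1}$ the simplicial deleted join $K\circledast K$ is $\Z/2$-homeomorphic to $S^n$.  A standard $\Z/2$-map $K\circledast K\to\Sigma(K\otimes K)\subset\Sigma\tilde K$ then yields a $\Z/2$-map $S^n\to\Sigma\tilde U$, so a $\Z/2$-map $\Sigma\tilde U\to S^{n-1}$ would contradict Borsuk--Ulam.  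Thus the paper bounds the \emph{coindex} of $\Sigma\tilde U$ from below, whereas you bound its cohomological index from below via an explicit identification $\tilde U/(\Z/2)\simeq\R P^{n-1}$.  Your route is more hands-on and entirely self-contained (no deleted-join machinery, no appeal to \cite{M}); the paper's route is shorter once that machinery is in place and generalises immediately to any self-dual subcomplex of a simplex.

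Two small points of wording.  First, your ``equivalently'' is really an implication: $w^{n-1}\ne 0$ implies there is no $\Z/2$-map $\tilde U\to S^{n-2}$, not conversely---but since you go on to prove $w^{n-1}\ne 0$ this is harmless.  Second, the identity $\mathrm{ind}_{\Z/2}(A*B)=\mathrm{ind}_{\Z/2}(A)+\mathrm{ind}_{\Z/2}(B)+1$ should be read for the \emph{cohomological} index (highest nonvanishing power of $w$), where it is standard; for the mapping index only the inequality $\le$ is immediate.  Since what you actually establish is the cohomological statement $w^{n-1}\ne 0$, the needed direction follows.  (Alternatively, once you know $\tilde U/(\Z/2)\simeq\R P^{n-1}$ with $w$ the generator, the quotient $(\Sigma\tilde U)/(\Z/2)$ is the mapping cone of the double cover $\tilde U\to\tilde U/(\Z/2)$, hence $\simeq\R P^n$, giving $v^n\ne 0$ directly.)
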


A different proof of Theorem \ref{t2}, also based on Lemma \ref{l2}, is given in \cite{SS}.

\begin{proof}
Let us triangulate $U:=U^{n-1}$ as the cone over $\partial\Delta^{n-1}\sqcup pt$.
Then it is self-dual as a subcomplex of $\Delta^{n+1}$, i.e.\ contains precisely one face out of each pair $\Delta^k$, $\Delta^{n-k}$
of complementary faces of $\Delta^{n+1}$.
Consequently its simplicial deleted join $U\circledast U$ is $\Z/2$-homeomorphic to $S^n$ \cite{M}*{Corollary 3.16}.
Also there exists a $\Z/2$-map from $U\circledast U$ to the suspension over the simplicial deleted product 
$U\otimes U$ (see \cite{M}*{Lemma 3.25}).
Since $U\otimes U\subset\tilde U$, by the Borsuk--Ulam theorem there exists no $\Z/2$-map 
$\Sigma\tilde U\to S^{n-1}$.
\end{proof}

\end{document}